\begin{document}

\theoremstyle{plain}
\newtheorem{prop}{Proposition}
\newtheorem{theorem}{Theorem}
\newtheorem{corollary}{Corollary}
\newtheorem{lemma}{Lemma}

\theoremstyle{definition}
\newtheorem{definition}{Definition}
\newtheorem{remark}{Remark}
\newtheorem{remarks}{Remarks}
\newtheorem{example}{Example}
\newtheorem{examples}{Examples}

\newcommand{\rr}{\mathbb R}
\newcommand{\nn}{\mathbb N}
\newcommand{\qq}{\mathbb Q}
\newcommand{\zz}{\mathbb Z}
\renewcommand{\ss}{\mathbb S}
\newcommand{\sse}{\subseteq}
\newcommand{\nti}{\not \in}
\newcommand{\spera}{\text{Sper } A}

\newcommand{\rp}{\rr^+[X]}
\newcommand{\ts}{\thinspace}

\newcommand{\al}{\alpha}
\newcommand{\be}{\beta}
\newcommand{\ga}{\gamma}
\newcommand{\de}{\delta}
\newcommand{\sig}{\sigma}
\newcommand{\Ga}{\Gamma}
\newcommand{\la}{\lambda}
\newcommand{\La}{\Lambda}
\newcommand{\all}{\frac{\alpha}{|\alpha|}}
\newcommand{\ep}{\epsilon}
\newcommand{\alo}{\alpha_1}
\newcommand{\ai}{\alpha_i}
\newcommand{\ak}{\alpha_k}
\newcommand{\an}{\alpha_n}
\newcommand{\supp}{{\rm supp}}

\title{Rational Certificates of Positivity on Compact Semialgebraic Sets}
\author{Victoria Powers \footnote{
Department of Mathematics and Computer Science,
Emory University,
Atlanta, GA 30322. Email:\, vicki@mathcs.emory.edu.}}

\maketitle

\begin{abstract}  Given $g_1, \dots , g_s \in \rr[X] = \rr[X_1, \dots , X_n]$ such that the
semialgebraic set $K := \{ x \in \rr^n \mid g_i(x) \geq 0$ for all $i \}$ is compact.
Schm\"udgen's Theorem says that if $f \in \rr[X]$ such that $f > 0$ on $K$, then
$f$ is in the preordering in $\rr[X]$ generated by the $g_i$'s, i.e., $f$ can be
written as a finite sum of elements $\sig g_1^{e_1} \dots g_s^{e_s}$, where $\sig$ is
a sum of squares in $\rr[X]$ and each $e_i \in \{0,1\}$.  Putinar's Theorem says
that under a condition stronger than compactness, any $f > 0$ on $K$ can be written
$f = \sig_0 + \sig_1 g_1 + \dots + \sig_s g_s$, where $\sig_i \in \rr[X]$.  Both of
these theorems can be viewed as statements about the existence of certificates of
positivity on compact semialgebraic sets.  In this note we show that if the defining
polynomials $g_1, \dots , g_s$ and polynomial $f$ have coefficients in $\qq$, then
in Schm\"udgen's Theorem we can find a representation in which the $\sig$'s are
sums of squares of polynomials over $\qq$.  We prove a similar result for Putinar's
Theorem assuming that the set of generators contains $N - \sum X_i^2$ for some
$N \in \nn$.
\end{abstract}

\section{Introduction}

We write $\nn$, $\rr$, and $\qq$ for the set of natural, real, and rational numbers.
Let $n \in \nn$ be fixed and
let $\rr[X]$ denote the polynomial
ring $\rr[X_1, \dots , X_n]$.   We denote by
$\sum \rr[X]^2$ the set of sums of squares in $\rr[X]$.

For $S = \{ g_1, \dots , g_s \} \sse \rr[X]$, the {\it basic closed
semialgebraic set} generated by $S$, denoted $K_S$, is
$$\{ x \in \rr^n \mid g_1(x) \geq 0, \dots , g_s(x) \geq 0 \}.$$
Associated to $S$ are two algebraic objects:  The {\it quadratic module
generated by $S$}, denoted $M_S$,  is the set of $f \in \rr[X]$
which can be written $$f = \sig_0 + \sig_1 g_1 + \dots + \sig_s g_s,$$
where each $\sig_i \in \sum \rr[X]^2$, and the {\it preordering generated
by $S$}, denoted $T_S$, is the quadratic module generated by all
products of elements in $S$.  In other words, $T_S$ is the set
of $f \in \rr[X]$ which can be written as a finite sum of
elements $\sig g_1^{e_1} \dots g_s^{e_s}$, where $\sig \in \rr[X]$ and
each $e_i \in \{ 0,1 \}$.

A polynomial $f \in \sum \rr[X]^2$ is obviously globally nonnegative
in $\rr^n$ and writing $f$ explicitly as a sum of squares gives a
``certificate of positivity" for the fact that $f$ takes only nonnegative
values in $\rr^n$.  (Note:  To avoid having to write ``nonnegative or
positive" we use the term ``positivity" to mean either.)  More generally,
for a basic closed semialgebraic set $K_S$, if $f \in T_S$ or $f \in M_S$,
then $f$ is nonnegative on $K_S$ and an explicit representation of
$f$ in $M_S$ or $T_S$ gives a certificate of
positivity for $f$ on $K_S$.

In 1991, Schm\"udgen \cite{schm} showed that if the semialgebraic
set $K_S$ is compact, then any $f \in \rr[X]$ which is strictly positive
on $K_S$ is in the preordering $T_S$.
In 1993, Putinar \cite{put} showed that under a certain condition which
is stronger than compactness any $f \in \rr[X]$ which is strictly
positive on $K_S$ is in the quadratic module $M_S$.  In other words, these
results say that under the given conditions a certificate of positivity
for $f$ on $K_S$ exists.

Recently, techniques from semidefinite
programming combined with Schm\"udgen's and Putinar's theorems have been used to give numerical
algorithms for applications such as
optimization of polynomials on semialgebraic sets.  However since these
algorithms are numerical they might not produce exact certificates
of positivity.
With this in mind, Sturmfels asked whether any $f \in \qq[X]$ which
is a sum of squares in $\rr[X]$ is a sum of squares in $\qq[X]$.  In \cite{hil},
Hillar showed that the answer is ``yes" in the case where $f$ is known to be
a sum of squares over a field $K$ which is Galois over $\qq$.  The general
question remains unsolved.

It is natural to ask a similar question for Schm\"udgen's
Theorem and Putinar's Theorem:  If the polynomials defining the semialgebraic
set and the positive polynomial $f$ have rational coefficients, is there a certificate
of positivity for $f$ in which the sums of squares have rational coefficients?
In this note, we show that in the case of Schm\"udgen's Theorem the answer is
``yes".  This follows from an algebraic proof of the theorem, originally due to
T.~W\"ormann \cite{wor}.  In the case of Putinar's Theorem, we show that the
answer is also ``yes" as long as the generating set contains $N - \sum X_i^2$
for some $N \in \nn$.  This follows easily from an algorithmic proof of the
theorem due to Schweighofer \cite{schw2}.  For Lasserre's method for optimization
of polynomials on compact semialgebraic sets, see \cite{LAS}, in concrete cases
it is usual to add a polynomial of the type $N - \sum X_i^2$ to the generators in order to
insure that Putinar's Theorem holds.  Thus our assumption in this case is
reasonable.

\section{Rational certificates of for Schm\"udgen's Theorem}

Fix $S = \{ g_1, \dots , g_s \} \sse \rr[X]$ and define $K_S$
and $T_S$ as above.

\begin{theorem}(Schm\"udgen) Suppose that $K_S$ is compact.  If $f \in \rr[X]$ and
$f > 0$ on $K_S$, then $f \in T_S$.
\end{theorem}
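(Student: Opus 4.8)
The plan is to prove Schmüdgen's Theorem via the algebraic route attributed to Wörmann, which reduces the statement to an abstract Positivstellensatz over the preordering together with a representation-theoretic argument. The key observation is that compactness of $K_S$ is equivalent to the preordering $T_S$ being \emph{archimedean}: there exists $N \in \nn$ with $N - \sum X_i^2 \in T_S$. The nontrivial direction of this equivalence is what I expect to be the main obstacle; I would establish it by a compactness argument showing that since $K_S \sse \{ \sum X_i^2 < N \}$, the element $N - \sum X_i^2$ is strictly positive on $K_S$, and then bootstrap from a weak positivity certificate (for instance, applying the abstract Positivstellensatz of Krivine--Stengle to get $N - \sum X_i^2 = p/q$ with $p, q \in T_S$, and then clearing denominators by multiplying through and using an identity of the form $pq = \frac{1}{4}((p+q)^2 - (p-q)^2)$ combined with bounds on $q$) to conclude $M - \sum X_i^2 \in T_S$ for a possibly larger $M$.

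Once $T_S$ is known to be archimedean, the core step is purely ring-theoretic. First I would show that $T_S$ is archimedean in the strong sense that for every $h \in \rr[X]$ there is an integer $k$ with $k \pm h \in T_S$ (this follows from $N - \sum X_i^2 \in T_S$ by writing $h$ as a polynomial in the $X_i$ and repeatedly using that sums and products of ``bounded'' elements are bounded, together with the identity $2h = (h+1)^2/1 \cdot \ldots$, i.e., standard manipulations with squares). Then, aiming for a contradiction, suppose $f > 0$ on $K_S$ but $f \nti T_S$. Using Zorn's Lemma, extend $T_S$ to a maximal proper preordering (equivalently, maximal among preorderings not containing $-f$, or apply the prime cone / separation argument): one obtains a ring homomorphism-like functional, and because the preordering is archimedean, the associated maximal preordering is the preimage of $[0,\infty)$ under a point evaluation, i.e., corresponds to a point $x_0 \in \rr^n$. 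Since each $g_i \in T_S$, we get $g_i(x_0) \geq 0$, so $x_0 \in K_S$, yet the construction forces $f(x_0) \leq 0$, contradicting $f > 0$ on $K_S$.

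The delicate point in the previous paragraph is making precise how an archimedean preordering's maximal elements are captured by point evaluations; I would handle this via the representation theorem for archimedean preordered rings: if $P$ is a proper preordering of a commutative ring $A$ with $1$ that is archimedean and such that $-1 \nti P$, then there is a unital ring homomorphism $\phi \colon A \to \rr$ with $\phi(P) \sse [0,\infty)$. For $A = \rr[X]$ such a homomorphism is exactly evaluation at the point $x_0 = (\phi(X_1), \dots, \phi(X_n))$. Applying this to (an extension of) $T_S$ that excludes $f$ — more precisely, to the preordering $T_S - f \cdot T_S$ or to a maximal proper preordering containing $T_S \cup \{ -f \}$, after checking properness using the archimedean hypothesis and $f > 0$ on $K_S$ — yields the point $x_0 \in K_S$ with $f(x_0) \leq 0$, the desired contradiction.

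Finally, I would remark on the rationality consequence that motivates the paper: each step above can be carried out over $\qq[X]$ when the $g_i$ and $f$ lie in $\qq[X]$, since the archimedean witness $N$ can be taken in $\nn$, all the algebraic identities used (clearing denominators, the square identities) have rational coefficients, and the Krivine--Stengle Positivstellensatz respects the base field; thus the sums of squares appearing in the final representation $f = \sum \sig_e \prod g_i^{e_i}$ can be taken to have rational coefficients. I would defer the detailed verification of this to a separate discussion, noting here only that the proof strategy is constructive enough in the relevant places to support the claim.
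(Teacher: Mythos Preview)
Your overall architecture matches the paper's: first establish that $T_S$ is archimedean, then invoke an abstract Positivstellensatz for archimedean preorderings. Your second step---extending to a maximal proper preordering and extracting a point evaluation $\phi\colon \rr[X]\to\rr$---is exactly the content of the Kadison--Dubois/Krivine theorem the paper quotes as Theorem~\ref{ks}, so there is no real difference there.

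Where you diverge is in the archimedean step. The paper (following W\"ormann) does \emph{not} pass through the Krivine--Stengle Positivstellensatz. Instead it observes that $K_S$ embeds densely in $\operatorname{Sper}(\rr[X],T_S)$, so compactness of $K_S$ gives $H(\rr[X])=\rr[X]$ directly; then W\"ormann's Lemma (Lemma~\ref{wor}, generalized in Theorem~\ref{sch}) supplies the implication $H(A)=A \Rightarrow H'(A)=A$, i.e.\ archimedeanness. Your route---get a Stengle certificate $p\,(N-\sum X_i^2)=1+q$ with $p,q\in T_S$ and bootstrap to $M-\sum X_i^2\in T_S$---is a legitimate and well-known alternative, but the specific manipulation you sketch is not the one that works: the identity $pq=\tfrac14((p+q)^2-(p-q)^2)$ does not by itself let you clear the denominator, and invoking ``bounds on $q$'' is circular, since arithmetic bounds are precisely what you are trying to produce. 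The correct bootstrap (as in Marshall's or Prestel--Delzell's treatments) instead exploits that $H'(A)$ is a subring closed under square roots in the sense that $n-a^2\in T\Rightarrow (n{+}1)\pm 2a\in T$, together with the Stengle identity, to force each $X_i$ into $H'(A)$.

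In short: correct strategy, genuinely different (and in some ways more elementary) input for the archimedean step than the paper's real-spectrum argument, but the denominator-clearing passage as written is a gap you would need to replace with the standard $H'(A)$-is-a-ring-closed-under-square-roots argument.
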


 In this section we show that
if  $f$ and the generating polynomials $g_1, \dots , g_s$
are in $\qq[X]$, then $f$ has a representation in $T_S$ in
which all sums of squares $\sigma_\ep$ are in $\sum \qq[X]^2$.
This follows  from T.~W\"ormann's algebraic proof
of the theorem using the classical Abstract Positivstellensatz,
and a generalization of W\"ormann's crucial lemma due
to M.~Schweighofer.

\medskip

\noindent
{\bf The Abstract Positivstellensatz}.
The Abstract
Positivstellensatz is usually attributed to Kadison-Dubois,
but now thought to be proven earlier by Krivine or Stone.  For
details on the history of the result, see \cite[Section 5.6]{PD}.  The setting is preordered
commutative rings.

Let $A$ be a commutative ring with $\qq \sse A$.  A subset $T \sse A$ is
a
{ \it preordering} if $T + T \sse T$, $T \cdot T \sse T$, and
$-1 \not \in T$.  For $S = \{ a_1, \dots , a_k \} \sse A$, we define
the {\it preordering generated by $S$}, $T_S$, exactly as for $A=\rr[X]$.

An {\it ordering} in $A$ is a preordering $P$ such that $P \cup -P = A$
and $P \cap -P$ is a prime ideal.  Any $a \in A$ has a unique sign in $\{ -1,0,1 \}$
with respect to a fixed ordering $P$ and we use the notation $a \geq_P 0$ if
$a \in P$, $a >_P 0$ if $a \in P \setminus (P \cap -P)$, etc.

Fix a preordered ring $(A,T)$ and denote by $\spera$ the real spectrum
of $(A,T)$, i.e., the set of orderings of $A$
which contain $T$.  Then define
$$H(A) = \{ a \in A \mid \text{ there exists }n \in \nn \text{ with } n \pm a \geq_P 0
\text{ for all } P \in \spera \},$$ the {\it ring of geometrically bounded elements in $(A,T)$}, and
$$H'(A) = \{ a \in A \mid \text{ there exists }n \in \nn \text{ with }n \pm a \in T  \},$$ the {\it ring of
arithmetically bounded elements in $(A,T)$}.  Clearly, $H'(A) \sse H(A)$.  The preordering $T$ is
{\it archimedean} if $H'(A) = A$.

The following version of the Abstract Positivstellensatz
is \cite[Theorem 1]{schw}:

\begin{theorem} \label{ks}  Given the preordered ring $(A,T)$ as
above and suppose $A = H'(A)$.
For any $a \in A$,  if $a >_P 0$ for all $P \in \spera$, then $a \in T$.
\end{theorem}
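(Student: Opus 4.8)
My plan is to prove the contrapositive: assuming $a \nti T$, I will produce a single ordering $P \in \spera$ with $-a \in P$, so that $a \not>_P 0$, which contradicts the hypothesis. That ordering will be read off from a ring homomorphism $A \to \rr$, extracted from a compact convex space of functionals; the pointwise strict positivity of $a$ on $\spera$ is invoked only at the very end, and compactness of $\spera$ itself is not needed at all.

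First I would set up the ``state space''. Since $\sum A^2 \sse T$, every positive rational lies in $T$, no negative rational does, and $T$ is a convex cone in the $\qq$-vector space $A$; because $A = H'(A)$, for each $b \in A$ there is $n_b \in \nn$ with $n_b \pm b \in T$. Define $p(b) := \inf\{\, q \in \qq : q - b \in T \,\}$. Using $-1 \nti T$ and $\qq_{\ge 0}\cdot T \sse T$ one checks that $\{q \in \qq : q - b \in T\}$ is nonempty and bounded below, so that $p$ is a well-defined, $\rr$-valued, subadditive, $\qq_{\ge 0}$-homogeneous functional with $p(1)=1$ and $p(-t) \le 0$ for every $t \in T$. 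Let $\mathcal S$ be the set of $\qq$-linear maps $L \colon A \to \rr$ with $L \le p$; then automatically $L(1)=1$, $L(T) \sse \rr_{\ge 0}$, and $|L(b)| \le n_b$, so $\mathcal S$ is a closed convex subset of the compact product $\prod_{b\in A}[-n_b,n_b]$, hence is compact and convex, and it is nonempty by Hahn--Banach. Since $a \nti T$ we get $p(-a) \ge 0$ --- otherwise $q + a \in T$ for some negative rational $q$, whence $a = (q+a)+(-q) \in T$ --- so Hahn--Banach yields $L \in \mathcal S$ with $L(a) = -p(-a) \le 0$. As $L' \mapsto L'(a)$ is continuous and affine on the compact convex set $\mathcal S$, it attains its minimum at an extreme point $L_0$ of $\mathcal S$ (continuous affine functions on compact convex sets attain their extrema at extreme points), with $L_0(a) \le L(a) \le 0$.

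The key step would be to show that every extreme point $L_0$ of $\mathcal S$ is a ring homomorphism. Suppose not: there is $b$ with $L_0(b^2) \neq L_0(b)^2$; replacing $b$ by $b - L_0(b)$ I may assume $L_0(b)=0$, hence $L_0(b^2) > 0$. Choose $N \in \nn$ with $N \pm b \in T$. Then $(N \pm b)\,T \sse T$ and $L_0(N \pm b) = N$, so $L_{\pm}(c) := \tfrac1N L_0\big((N \pm b)\,c\big)$ define elements of $\mathcal S$ with $\tfrac12 L_+ + \tfrac12 L_- = L_0$; extremality forces $L_+ = L_- = L_0$, whence $L_0(bc)=0$ for all $c \in A$, contradicting $L_0(b^2) > 0$. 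Thus $L_0(b^2)=L_0(b)^2$ for every $b$, and polarization makes $L_0$ a unital ring homomorphism. Then $P := \{\, c \in A : L_0(c) \ge 0 \,\}$ is an ordering of $A$: closed under $+$ and $\cdot$, with $P \cup -P = A$ and $P \cap -P = \ker L_0$ prime since $A/\ker L_0$ embeds in $\rr$; and $T \sse P$ since $L_0(T) \sse \rr_{\ge 0}$, so $P \in \spera$. Finally $L_0(a) \le 0$ gives $-a \in P$, i.e.\ $a \not>_P 0$, contradicting the hypothesis. Hence $a \in T$.

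I expect the main obstacle to be exactly this last step --- the identification of the extreme points of $\mathcal S$ with characters of $A$. It is the one place where the archimedean hypothesis $A = H'(A)$ is genuinely essential, through the bound $N \pm b \in T$ that allows $L_0$ to be split into $L_{\pm}$; the rest of the argument is soft functional analysis, the only other use of $A = H'(A)$ being to keep the infimum defining $p$ finite and bounded below.
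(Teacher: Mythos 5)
The paper itself gives no proof of Theorem \ref{ks}: it is quoted from Schweighofer's thesis and is the classical Kadison--Dubois/Krivine representation theorem, so your argument can only be measured against the standard proofs. What you outline is the classical functional-analytic route (the state space $\mathcal S$, Hahn--Banach over $\qq$, compactness in $\prod_b[-n_b,n_b]$, extreme states are characters), and most of it is sound: the sublinear functional $p$, the existence of $L\in\mathcal S$ with $L(a)=-p(-a)\le 0$, the passage to an extreme point, and the final step turning a character $L_0$ with $L_0(a)\le 0$ into an ordering $P\in\spera$ for which $a>_P0$ fails are all correct. (One caveat: you use $\sum A^2\sse T$, e.g.\ to get $\qq_{\ge 0}\sse T$ and $L_0(b^2)\ge 0$; the paper's one-line definition of preordering omits this, but it is the intended standard hypothesis and holds for the preorderings actually used, so this is presentational rather than a gap.)

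The genuine gap is inside the key step. You reduce to $L_0(b)=0$ by ``replacing $b$ by $b-L_0(b)$'', but $L_0(b)$ is an arbitrary real number while $A$ is only assumed to contain $\qq$; the element $b-L_0(b)\cdot 1$ need not lie in $A$ (in the intended application $A=\qq[X]$ it does not whenever $L_0(b)\nti\qq$). This normalization is not cosmetic: it is exactly what makes your $L_\pm$ satisfy $L_\pm(1)=1$, so it cannot simply be dropped, and approximating $L_0(b)$ by rationals destroys the exact convex decomposition. The standard repair is to normalize the functionals instead of translating $b$: take $N:=n_b+1$, so $N\pm L_0(b)>0$, and set $L_\pm(c):=L_0\bigl((N\pm b)c\bigr)/\bigl(N\pm L_0(b)\bigr)$. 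These are $\qq$-linear, satisfy $L_\pm(1)=1$ and $L_\pm(T)\sse\rr_{\ge 0}$, hence lie in $\mathcal S$; here you should record the characterization $\mathcal S=\{L \text{ $\qq$-linear}: L(1)=1,\ L(T)\sse\rr_{\ge 0}\}$, which you implicitly need in any case, since $L_\pm\le p$ has to be verified and follows precisely from this. Then $L_0=\mu L_+ +(1-\mu)L_-$ with $\mu=(N+L_0(b))/(2N)\in(0,1)$, and extremality gives $L_0(bc)=L_0(b)L_0(c)$ for all $c$, for every $b$, so $L_0$ is multiplicative directly --- no translation, contradiction, or polarization needed. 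With this correction your argument is a complete proof of Theorem \ref{ks}.
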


Consider the case where $A = \rr[X]$ and
$T = T_S$ for $S = \{ g_1, \dots , g_s \} \sse \rr[X]$.
Let $K = K_S$, then $K$ embeds
densely in $\spera$ and hence $H(A) = \{ f \in \rr[X] \mid
f$ is bounded on $S \}$.  If $S$ is compact, this implies
$H(A) = A$ and
Schm\"udgen's Theorem follows
from the following lemma \cite[Lemma 1]{bw}:

\begin{lemma} \label{wor}  With $A$, $T$, and $S$ as above, if $H(A) = A$
then $H'(A) = A$.
\end{lemma}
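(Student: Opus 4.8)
The plan is to prove the equivalent assertion that the preordering $T=T_S$ is archimedean. I would begin with two soft observations about the ring $H'(A)$ of arithmetically bounded elements. First, $H'(A)$ is a subring of $A=\rr[X]$ containing $\rr$: it is obviously closed under addition; if $n\pm a\in T$ and $m\pm b\in T$ then $nm\pm ab=\tfrac12\big((n+a)(m\pm b)+(n-a)(m\mp b)\big)\in T$, so $ab\in H'(A)$; and for $r\in\rr$ and $n\in\nn$ with $n>|r|$ the reals $n\pm r$ are nonnegative, hence squares, hence in $T$. Since $\rr[X]$ is generated as a ring over $\rr$ by $X_1,\dots,X_n$, it is enough to show $X_i\in H'(A)$ for each $i$, and for this it is enough to produce a single $N\in\nn$ with $g:=N-\sum_i X_i^2\in T_S$. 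Indeed, then $N-X_i^2=g+\sum_{j\ne i}X_j^2\in T_S$, whence $(N+1)\pm 2X_i=(X_i\pm 1)^2+(N-X_i^2)\in T_S$; multiplying by $\tfrac12$ and absorbing a nonnegative rational constant gives $M\pm X_i\in T_S$ for a suitable $M\in\nn$, i.e.\ $X_i\in H'(A)$.

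So everything comes down to exhibiting $N$ with $N-\sum_i X_i^2\in T_S$. The hypothesis $H(A)=A$ is exactly what makes this plausible: since $\sum_i X_i^2\in H(A)$ there is $N_0$ with $N_0-\sum_i X_i^2\ge_P 0$ for all $P\in\spera$, and after replacing $N_0$ by $N_0+1$ we may take $g:=N_0-\sum_i X_i^2$ with $g>_P 0$ for every $P\in\spera$; as $K_S$ is dense in $\spera$ this forces $g>0$ on $K_S$ and $K_S$ compact. The task is now to pass from ``$g>0$ on the compact set $K_S$'' to ``$g\in T_S$''. My plan here is to invoke Stengle's Positivstellensatz --- which, in contrast to Schm\"udgen's theorem, is legitimately available at this stage --- to obtain $\eta,\tau\in T_S$ with $\eta g=1+\tau$, and then bootstrap: multiplying by $g$ and rearranging gives $g=\eta g^2-\tau g$, and since $\eta g^2\in T_S$ and $\tau g=N_0\tau-\tau\sum_i X_i^2$ with $\tau\sum_i X_i^2\in T_S$, one gets $g+N_0\tau\in T_S$, that is $N_0(1+\tau)-\sum_i X_i^2\in T_S$. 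One then needs to replace the multiplier $1+\tau$ by a constant, i.e.\ to bound $\tau$ arithmetically, iterating the construction as required. A second route, the one that Schweighofer makes effective, is to enlarge $K_S$ to the box $[-N_0,N_0]^n$ and to combine the Positivstellensatz there with P\'olya's theorem on strictly positive forms on a simplex.

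I expect the control of the multiplier to be the genuine obstacle: Stengle's theorem gives no bound on the degree or coefficients of $\eta$ and $\tau$, so the naive iteration need not terminate, and deducing $g\in T_S$ from positivity of $g$ on the compact set $K_S$ without circularly invoking Schm\"udgen's theorem is precisely the hard kernel of W\"ormann's lemma. Granting that kernel, the first paragraph closes the argument: each $X_i$ then lies in $H'(A)$, so $H'(A)=\rr[X]=A$.
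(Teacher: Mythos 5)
Your first paragraph and the setup of the second are sound: $H'(A)$ is a subring of $A$ containing $\rr$, so the lemma reduces to producing a single integer $N$ with $N-\sum X_i^2\in T_S$, and the hypothesis $H(A)=A$ does yield $N_0$ such that $g:=N_0-\sum X_i^2$ is strictly positive on the (compact) set $K_S$. But that reduction is the routine half of the statement, and your own text concedes the other half: passing from ``$g>0$ on $K_S$'' to an actual membership $N-\sum X_i^2\in T_S$ is precisely what the lemma asserts, and you leave it as an assumption (``Granting that kernel\dots''). The Positivstellensatz bootstrap you begin --- $\eta g=1+\tau$ with $\eta,\tau\in T_S$, hence $N_0(1+\tau)-\sum X_i^2\in T_S$ --- is algebraically correct but stalls exactly where you say it does: to strip off the factor $1+\tau$ you would need an arithmetic bound on the polynomial $\tau$, i.e.\ $\tau\in H'(A)$, which is an instance of the very problem being solved, and the naive iteration gives no terminating scheme. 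So the proposal is not a proof; the genuinely hard content (W\"ormann's device for taming the Positivstellensatz multiplier, or alternatively Schweighofer's route through P\'olya's theorem, which you gesture at in one sentence without carrying it out) is missing.

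For calibration: the paper itself does not prove this lemma either --- it quotes it from Berr and W\"ormann \cite[Lemma 1]{bw}, and for its rationality statement it actually invokes Schweighofer's generalization, Theorem \ref{sch}, whose proof (an induction on transcendence degree in his thesis) is likewise not reproduced here. A self-contained treatment of the lemma would therefore have to supply the Berr--W\"ormann argument (or Schweighofer's) in full; correctly locating the difficulty, as you do, is not a substitute for resolving it.
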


Our result follows from a generalization of Lemma \ref{wor},
which is \cite[Theorem 4.13]{schw}:

\begin{theorem} \label{sch}
Let $F$ be a subfield of $\rr$  and $(A,T)$ a preordered
$F$-algebra such that $F \sse H'(A)$ and $A$ has
finite transcendence degree over $F$.  Then
$$A = H(A) \Rightarrow A = H'(A).$$
\end{theorem}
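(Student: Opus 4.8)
The plan is to follow and adapt W\"ormann's proof of Lemma~\ref{wor}, the key idea being to apply the Abstract Positivstellensatz (Theorem~\ref{ks}) not to $A$ but to the subring of arithmetically bounded elements, which is automatically archimedean. Write $B := H'(A)$. One records the standard facts: $B$ is a subring of $A$ containing $\qq$, hence by hypothesis containing $F$; it is closed under ``bounded square roots'', i.e.\ if $a \in A$ and $b - a^{2} \in T$ for some $b \in B$ then $a \in B$ (a short computation with $(a \pm 1)^{2}$ and the witnesses for $b$); and, most importantly, $(B, T \cap B)$ is itself archimedean, since for $b \in B$ the elements $n \pm b$ witnessing $b \in H'(A)$ already lie in $B$, so $n \pm b \in T \cap B$. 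Consequently Theorem~\ref{ks} applies to $(B, T \cap B)$: any element of $B$ that is $>_{P} 0$ for every ordering $P$ of $B$ extending $T \cap B$ already lies in $T$.

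\textbf{The crucial lemma.} The technical core is a generalization of W\"ormann's crucial lemma: if $A = B[a_{1}, \dots, a_{m}]$ with every $a_{j}$ integral over $B$, then $A = B$. I would reduce to a single integral generator $a$, view $M := B[a]$ as a finitely generated $B$-module on which multiplication by $a$ is a $B$-linear endomorphism with some characteristic polynomial over $B$, and then use the archimedean property of $B$ together with Theorem~\ref{ks} for $(B, T \cap B)$ to upgrade the classical estimate ``a root of a monic polynomial is bounded by a function of its coefficients'' into an honest arithmetic bound $N \pm a \in T$; this gives $a \in B$, and iterating handles several generators. This passage from a bound on the real spectrum to a bound \emph{witnessed inside} $T$ is where I expect the main obstacle: the root estimate must be made effective within the preordering — which is exactly what the archimedean Positivstellensatz on $B$ provides — and the module/characteristic-polynomial bookkeeping has to be arranged so that this applies cleanly.

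\textbf{Deducing the theorem.} Granting the lemma, argue by induction on $d := \operatorname{tr.deg}(A/F)$. If $d = 0$, every $a \in A$ is algebraic over $F$, hence satisfies its (monic) minimal polynomial over $F \subseteq B$ and is integral over $B$, so $a \in B$ by the lemma and $A = B = H'(A)$. If $d \ge 1$, the $d = 0$ reasoning already puts every element of $A$ that is algebraic over $F$ into $B$; the genuinely transcendental part is controlled using the hypothesis $A = H(A)$ via a Noether-normalization-type step — inside any finitely generated subalgebra one selects a transcendence basis over which the rest is integral, then feeds this to the inductive hypothesis after cutting the transcendence degree down, the finiteness of $d$ being precisely what makes the process terminate. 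Two bookkeeping hazards remain: $A$ need not be a domain, so ``relative algebraic closure of $F$ in $A$'' should be replaced by a more careful statement (e.g.\ after quotienting by a suitable prime contained in $T \cap -T$), and restriction of the real spectrum to a finitely generated subalgebra need not be surjective, so the reduction must be set up to respect geometric boundedness. Neither changes the conceptual shape, which rests entirely on the integrality-plus-Positivstellensatz step above.
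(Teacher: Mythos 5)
The paper does not actually prove Theorem~\ref{sch}: it is quoted from Schweighofer's thesis \cite[Theorem 4.13]{schw}, so your attempt must be measured against that proof rather than anything in this note, and measured that way there is a genuine gap, sitting exactly where you yourself locate ``the main obstacle.'' Your only engine for converting spectral positivity into membership in $T$ is Theorem~\ref{ks} applied to $(B,\,T\cap B)$ with $B=H'(A)$. That application is legitimate (your observations that $B$ is a subring containing $F$, is closed under bounded square roots, and that $(B,\,T\cap B)$ is archimedean are all correct), but it can only certify elements \emph{of} $B$: it says nothing about an element $a\notin B$, and producing $N\pm a\in T$ for such an $a$ is precisely what has to be proved. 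In the finitely generated situation of Lemma~\ref{wor} the engine is different: one applies the concrete Krivine--Stengle Positivstellensatz for the finitely generated preordering $T_S\sse\rr[X]$ to $k-\sum x_i^2$, which is positive on $\spera$ because $H(A)=A$, and W\"ormann's formal trick turns the resulting identity $p\,(k-\sum x_i^2)=1+q$ into an arithmetic bound. In the abstract setting of Theorem~\ref{sch} neither $A$ nor $T$ is finitely generated, so no such Positivstellensatz is available, and supplying a substitute is the substance of Schweighofer's proof; your outline contains none. Relatedly, the hypothesis $A=H(A)$ is never actually used in your sketch: the $d=0$ case does not need it, and in the $d\ge 1$ case it enters only through the phrase ``controlled \dots via a Noether-normalization-type step,'' with no mechanism given for actually lowering the transcendence degree and no way to transfer geometric boundedness to a finitely generated subalgebra --- as you note yourself, orderings of $(A',\,T\cap A')$ need not extend to $(A,T)$, so $H(A')=A'$ can fail; that is not a bookkeeping hazard but the central obstruction to any naive reduction to Lemma~\ref{wor}.

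Second, the ``crucial lemma'' you isolate (elements integral over $B$ lie in $B$) is not a generalization of W\"ormann's crucial lemma but a different, weaker statement, and even granted it cannot carry the theorem: integrality only propagates boundedness to elements algebraic over things already known to be arithmetically bounded, so the transcendental generators --- the only place where $A=H(A)$ must do work --- are never reached. The lemma itself is believable and can be proved by purely formal manipulation of the integral equation (square the relation, use $(\sum u_i)^2\le_T n\sum u_i^2$ and $(N^2-b_i^2)\,a^{2i}\in T$ to get $a^{2n}\le_T C\,(1+a^2+\cdots+a^{2n-2})$, then complete squares to bound $a^2$ and apply the bounded-square-root trick), but your proposed route --- ``upgrade the classical root estimate via Theorem~\ref{ks} on $B$'' --- fails for the reason above: the element to be bounded does not lie in $B$, so Theorem~\ref{ks} has nothing to say about it. In short, the skeleton (pass to $B=H'(A)$, induct on transcendence degree) is plausible and partly parallels Schweighofer's argument, but the two load-bearing steps --- a Positivstellensatz-type input valid without finite generation, and a genuine descent in transcendence degree compatible with geometric boundedness --- are missing, and those are exactly the content of \cite[Theorem 4.13]{schw}.
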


We can now prove the existence of rational certificates of
positivity in Schm\"udgen's Theorem.  The argument is
exactly that of the proof of the general theorem above.

\begin{theorem}  Given $S = \{ g_1, \dots , g_s \} \sse \qq[X]$ and
suppose $K_S \sse \rr^n$ is compact.  Then for any and $f \in \qq[X]$ such
that  $f > 0$ on $K_S$, there is a representation of $f$ in the preordering
$T_S$,  $$f = \sum_{e \in \{0,1\}^s}  \sig_e g_1^{e_1} \dots g_s^{e_s},$$ with
all $\sig_e \in \sum \qq[X]^2$.
\end{theorem}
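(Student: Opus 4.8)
The plan is to mimic the abstract argument, but carried out over $\qq$ instead of $\rr$. The key realization is that Theorem~\ref{sch} is stated for an arbitrary subfield $F$ of $\rr$, so we simply take $F = \qq$, $A = \qq[X]$, and $T = T_S$ the preordering in $\qq[X]$ generated by $S = \{g_1,\dots,g_s\}$ (i.e.\ sums of the form $\sum_e \sig_e g_1^{e_1}\cdots g_s^{e_s}$ with $\sig_e \in \sum\qq[X]^2$). Then $A$ has transcendence degree $n$ over $\qq$, which is finite, and $\qq \sse H'(A)$ because for any rational number $q$ we trivially have $n \pm q \in \sum\qq[X]^2 \sse T$ for $n$ a large enough natural number; so the hypotheses $F \sse H'(A)$ and finite transcendence degree are met.

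Next I would verify the bounded-element condition $A = H(A)$. The real spectrum of $(\qq[X], T_S)$ consists of orderings of $\qq[X]$ containing $T_S$; as in the $\rr[X]$ case, $K_S$ embeds densely in $\spera$, so $H(A)$ is the set of polynomials in $\qq[X]$ that are bounded on $K_S$. (More carefully: an ordering $P \in \spera$ gives a homomorphism to a real closed field whose restriction to the "finite" part lands near a point of the closure of $K_S$, and compactness of $K_S$ forces every polynomial to be geometrically bounded.) Since $K_S$ is compact, every $f \in \qq[X]$ is bounded on it, so $H(A) = A$. Invoking Theorem~\ref{sch}, we conclude $H'(A) = A$, i.e.\ $T_S$ (in $\qq[X]$) is archimedean.

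Finally I would run the positivity argument. Given $f \in \qq[X]$ with $f > 0$ on $K_S$, I want to apply Theorem~\ref{ks} to $(A,T) = (\qq[X], T_S)$: we have just shown $A = H'(A)$, so it remains to check $f >_P 0$ for every $P \in \spera$. This follows from the density of $K_S$ in $\spera$ together with the strict positivity $f > 0$ on the compact set $K_S$: because $f$ is strictly positive on a compact set it is bounded below by some $\ep > 0$ there, hence $f - \ep \geq 0$ on $K_S$, which translates (again via the Positivstellensatz-type boundedness, or by a direct specialization argument) into $f >_P 0$ on all of $\spera$; a little care is needed here, since strict positivity at points does not immediately give strict positivity at the "infinitesimal" orderings, but the uniform lower bound $\ep$ handles exactly this. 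Theorem~\ref{ks} then yields $f \in T_S$, which is precisely a representation $f = \sum_{e\in\{0,1\}^s} \sig_e g_1^{e_1}\cdots g_s^{e_s}$ with all $\sig_e \in \sum\qq[X]^2$.

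The main obstacle is the bookkeeping around the real spectrum: making precise that $K_S$ is dense in $\mathrm{Sper}\,\qq[X]$ (with respect to $T_S$), that this identifies $H(A)$ with functions bounded on $K_S$, and that $f > 0$ on $K_S$ upgrades to $f >_P 0$ for all $P$. None of this is hard once one has the $\rr[X]$ version in hand — the point is merely that every step of W\"ormann's argument is insensitive to whether the ground field is $\rr$ or $\qq$, precisely because Schweighofer's Theorem~\ref{sch} is already stated over a general subfield $F \sse \rr$. So the proof is essentially a verification that nothing in the classical argument used more than $\qq$-coefficients.
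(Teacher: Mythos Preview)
Your proposal is correct and follows essentially the same route as the paper: set $A=\qq[X]$, $T=T_S$, use compactness plus density of $K_S$ in $\mathrm{Sper}\,A$ to get $H(A)=A$, invoke Theorem~\ref{sch} with $F=\qq$ to obtain $H'(A)=A$, and then apply Theorem~\ref{ks}. You are in fact more explicit than the paper on several points it leaves implicit (finite transcendence degree, the uniform lower bound $\ep$ to upgrade $f>0$ on $K_S$ to $f>_P 0$ on all of $\mathrm{Sper}\,A$).
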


\begin{proof}  Let $T$ be the preordering in $\qq[X]$ generated
by $S$.  Since $K_S$ is compact, every element
of $\qq[X]$ is bounded on $K_S$.  Then $K_S$ dense in $\spera$ implies
that $H(\qq[X])= \qq[X]$, hence by Theorem \ref{sch} we have
$\qq[X] = H'(A)$.  Note that the condition $F \sse H'(A)$ holds
in this case since $\qq^+ = \sum \qq^2$.
The result follows from Theorem \ref{ks}.
\end{proof}

\section{Rational certificates for Putinar's Theorem}

Given $S = \{ g_1, \dots, g_s \}$, recall that the quadratic
module generated by $S$, $M_S$, is the set of
elements in the preordering $K_S$ with a ``linear"
representation, i.e.,
$$M_S = \{ \sig_0 + \sig_1 g_1 + \dots \sig_s g_s  \mid \sig_i \in \sum \rr[X]^2\}.$$
 In order to guarantee representations of positive
polynomials in the quadratic module, we need a condition stronger
than compactness of $K_S$, namely, we need $M_S$ to be archimedean.

The quadratic module $M_S$  is archimedean if
all elements of $\rr[X]$ are bounded by a positive integer
with respect to $M_S$, i.e., if for every $f \in \rr[X]$ there
is some $N \in \nn$ such that $N - f \in M_S$.  It is not too
hard to show that
$M_S$ is archimedean if
there is some $N \in \nn$ such that $N - \sum X_i^2 \in M_S$.
Clearly, if $M_S$ is archimedean, then $K_S$ is compact; the
polynomial $N - \sum X_i^2$ can be thought of as a ``certificate
of compactness".  However, the converse is not true, see
\cite[Example 6.3.1]{PD}.  The key to the algebraic proof of
Schm\"udgen's Theorem from the previous section is showing that in the
case of the preordering generated by a finite set of elements
from $\rr[X]$, the compactness of the semialgebraic set
implies that the corresponding preordering is archimedian.

In 1993, Putinar \cite{put} showed that
that if the quadratic module $M_S$ is archimedean, then we can replace the preordering
$T_S$ by the quadratic module $M_S$.

\begin{theorem} (Putinar)  Suppose that the quadratic module
$M_S$ is archimedean.  Then for every $f \in \rr[X]$ with
$f > 0$ on $K_S$, $f \in M_S$.
\end{theorem}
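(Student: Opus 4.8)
\medskip
\noindent\textbf{Proof proposal.} The plan is to argue by contradiction, combining a Hahn--Banach separation with the operator-theoretic (GNS) construction that converts a normalized positive linear functional into a representing measure; the archimedean hypothesis is exactly what forces that measure to be compactly supported inside $K_S$. So suppose $f \notin M_S$. From the definition of archimedean, for every $p \in \rr[X]$ there is an $N \in \nn$ with $N \pm p \in M_S$, and dividing by $N$ (a square, hence a legal scalar) shows $1 \pm p/N \in M_S$; thus $1$ is an internal point of the convex cone $M_S$ in the real vector space $\rr[X]$. Since $f \notin M_S$, a Hahn--Banach separation argument produces a nonzero linear functional $L \colon \rr[X] \to \rr$ with $L \geq 0$ on $M_S$ and $L(f) \leq 0$. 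If $L(1)$ were $0$, then $1 \pm p/N \in M_S$ would force $L(p) = 0$ for every $p$; hence $L(1) > 0$, and after rescaling we may assume $L(1) = 1$.

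Next I would build a representing measure for $L$ by the GNS construction. Every square lies in $M_S$, so $\langle p, q \rangle := L(pq)$ is a positive semidefinite symmetric bilinear form on $\rr[X]$; factoring out its kernel and completing gives a Hilbert space $H$ with a distinguished unit vector $\xi$, the class of $1$. Multiplication by $X_i$ induces a symmetric operator $A_i$ on the dense image of $\rr[X]$ in $H$. Choosing $C \in \nn$ with $C - \sum_i X_i^2 \in M_S$ and using $L\bigl((C - \sum_i X_i^2)\,p^2\bigr) \geq 0$, one obtains $\sum_i \|A_i v\|^2 \leq C\,\|v\|^2$ on the dense domain, so each $A_i$ extends to a bounded self-adjoint operator on $H$; the $A_i$ also commute, since they commute on the dense image. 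The joint spectral theorem for finitely many commuting bounded self-adjoint operators then supplies a spectral measure $E$ supported in the compact ball $\{x \in \rr^n : \sum_i x_i^2 \leq C\}$, and $\mu(B) := \langle E(B)\xi, \xi\rangle$ is a finite positive Borel measure with $L(p) = \int p\, d\mu$ for all $p \in \rr[X]$ and total mass $\mu(\rr^n) = L(1) = 1$.

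Finally I would verify $\supp(\mu) \subseteq K_S$ and derive the contradiction. For each $i$ and each $p \in \rr[X]$ we have $g_i p^2 \in M_S$, so $\int g_i p^2\, d\mu = L(g_i p^2) \geq 0$; since $\supp(\mu)$ is compact, polynomials are dense in $C(\supp(\mu))$, hence $\int g_i \varphi\, d\mu \geq 0$ for every nonnegative $\varphi \in C(\supp(\mu))$, which forces $g_i \geq 0$ on $\supp(\mu)$. Thus $\supp(\mu) \subseteq K_S$, and since $f > 0$ on $K_S$ while $\mu$ is a nonzero measure carried by $\supp(\mu)$, we get $L(f) = \int f\, d\mu > 0$, contradicting $L(f) \leq 0$. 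Hence $f \in M_S$.

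The step I expect to be the real obstacle is the middle one: the passage from the abstract functional $L$ to an honest measure. The separation is soft, and the support computation is routine once a measure is in hand, but the GNS construction, the boundedness and commutativity of the multiplication operators, and the invocation of the joint spectral theorem are where all the analytic content sits and where the archimedean hypothesis is genuinely used. (A more constructive alternative --- the one relevant to the rational refinement in the next section --- is Schweighofer's: first invoke Schm\"udgen's Theorem to get $f \in T_S$, then use the archimedean property to rewrite each product $g_1^{e_1}\cdots g_s^{e_s}$ as an element of $M_S$ up to an arbitrarily small error, absorbing that error using $f > 0$ on $K_S$.)
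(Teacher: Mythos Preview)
Your argument is correct and is essentially Putinar's original functional-analytic proof: Hahn--Banach separation using $1$ as an internal point of $M_S$, then GNS to build a Hilbert space, boundedness of the multiplication operators from $C - \sum X_i^2 \in M_S$, joint spectral theorem to extract a measure, and a density argument to pin the support inside $K_S$. Each step is sound.

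However, there is nothing in the paper to compare it against: the paper states Putinar's Theorem as a known result, cites \cite{put}, and gives no proof. The only argument the paper actually carries out in this section is for the \emph{rational} refinement, and there it follows Schweighofer's constructive route via P\'olya's Theorem --- reduce to a homogeneous polynomial positive on a simplex, apply P\'olya, and substitute back --- with no separation or spectral theory at all. You already flag this alternative in your final parenthetical, and that is the approach the paper in fact adopts (for the rational version). So your proposal supplies a valid proof where the paper simply quotes one; the methodological contrast is operator-theoretic existence versus an explicit algebraic rewriting, the latter being what makes the rationality tracking in the next theorem possible.
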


Lasserre's method for minimizing a polynomial on a compact semialgebraic
set, see \cite{LAS}, involves defining a sequence of semidefinite
programs corresponding to representations of bounded degree in $M_S$
whose solutions converge to the minimum.  In this context,  if $M_S$ is archimedean then
Putinar's Theorem implies the convergence of the semidefinite programs.
In practice, it is not clear how to decide if $M_S$
is archimedean for a given set of generators $S$, however in concrete
cases a polynomial $N - \sum X_i^2$ can be added to the generators
if an appropriate $N$ is known or can be computed.

Using an algorithmic proof of Putinar's Theorem due to M.~Schwieghofer \cite{schw2} we
can show that rational certificates exist for the theorem as long as we have
a polynomial $N - \sum X_i^2$ as one of our generators

\begin{theorem}  Suppose $S = \{ g_1, \dots , g_s \} \sse \qq[X]$ and
$N - \sum X_i^2 \in M_S$ for some $N \in \nn$.  Then given any $f \in \qq[X]$ such
that $f > 0$ on $K_S$, there exist
$\sig_0 \dots \sig_s, \sig \in \sum \qq[X]^2$ so that
$$f  = \sig_0 + \sig_1 g_1 + \dots + \sig_s g_s + \sig (N - \sum X_i^2).$$
\end{theorem}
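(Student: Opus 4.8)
The plan is to mimic Schweighofer's algorithmic proof of Putinar's Theorem \cite{schw2} and observe that every step in that proof preserves rationality of the data. The starting point is that, having $N - \sum X_i^2 \in M_S$ as one of the available certificates of boundedness, Schweighofer's argument produces the representation $f = \sig_0 + \sig_1 g_1 + \dots + \sig_s g_s + \sig(N - \sum X_i^2)$ by an explicit recursion: one first uses the archimedean hypothesis to bound $f$ and the $g_i$ away from problematic values on $K_S$, then reduces to a statement about strictly positive polynomials on a box $[-M,M]^n$ (which is handled by an explicit Bernstein/P\'olya-type argument), and finally lifts back. Each ingredient — the P\'olya positivity certificate for polynomials positive on a simplex or box, the substitution of the $g_i$ into those certificates, and the bookkeeping combining the pieces — is a finite algebraic manipulation. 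So the proof will be: (1) recall Schweighofer's construction, (2) run it with the given $S \subseteq \qq[X]$ and $f \in \qq[X]$, and (3) check at each stage that if the inputs are rational then so are the outputs.

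First I would isolate the one genuinely non-constructive-looking ingredient in Schweighofer's proof and handle it. The heart of the argument is the classical theorem that a polynomial strictly positive on the standard simplex (equivalently, after rescaling, on a box) has, for large enough $d$, all coefficients of $(\sum Y_j)^d \cdot p$ nonnegative when $p$ is homogenized — P\'olya's theorem. Crucially, this produces a representation with \emph{nonnegative rational} (indeed, the coefficients of $p$ itself) multipliers, and the exponent $d$ depends only on $\deg p$, $\max|$coeffs$|$, and the minimum of $p$ on the simplex — all of which can be bounded below by a positive rational once $p \in \qq[X]$, using the strict positivity and compactness. Thus the box-case certificate is manifestly over $\qq$: it expresses $p$ as a nonnegative-coefficient combination of monomials $\prod(M \pm X_i)$, and each $M\pm X_i$ lies in the quadratic module generated by $N - \sum X_i^2$ (and the constant $1$) with rational sum-of-squares coefficients, since $M \pm X_i = \tfrac14(\text{something})^2 + \text{rational}\cdot(N-\sum X_i^2) + \dots$ type identities, all with rational data.

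Next I would carry out the reduction step, again tracking rationality. Schweighofer writes $f = f_{\text{box}} + \sum_i (\text{sos})\, g_i$ where $f_{\text{box}}$ is (a small perturbation making it) strictly positive on the box $[-M,M]^n \supseteq K_S$, with $M$ chosen from $N$; the perturbation $f - \ep$ and the relevant $\ep > 0$ can be taken rational because $f > 0$ on the \emph{compact} set $K_S$ and the $g_i$ are continuous. The multipliers appearing in this reduction come from an explicit partition-of-unity-type or telescoping identity in Schweighofer's paper; they involve only the $g_i$, $N$, and rational constants, so they stay in $\sum\qq[X]^2$. Assembling: substitute the P\'olya certificate for $f_{\text{box}}$, expand $M\pm X_i$ in terms of $1$ and $N-\sum X_i^2$, collect all coefficients of $g_1,\dots,g_s$ and of $N-\sum X_i^2$ into single sums of squares $\sig_1,\dots,\sig_s,\sig$ and a remainder $\sig_0$, each a finite $\qq$-linear combination of squares of elements of $\qq[X]$, hence in $\sum\qq[X]^2$.

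The main obstacle I anticipate is purely one of bookkeeping and citation: Schweighofer's proof in \cite{schw2} is stated over $\rr$, and one must verify \emph{at each named step} that no irrational quantity is ever forced — in particular that the exponent $d$ in P\'olya's theorem and the perturbation parameter $\ep$ can always be chosen rational (they can, since they only need to satisfy finitely many strict rational inequalities), and that the use of $N - \sum X_i^2$ rather than an abstract archimedean certificate is exactly what makes the $M\pm X_i$ expansions rational. There is no deep new idea required — the rationality is "for free" once the constructive proof is on the table — so the write-up will mostly consist of pointing to the relevant constructions in \cite{schw2} and remarking that $\qq$ is closed under all operations used, with the $N-\sum X_i^2$ hypothesis supplying the one place where Putinar's setting (unlike an opaque archimedean assumption) admits an explicit rational boundedness certificate.
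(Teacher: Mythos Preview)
Your proposal is correct and takes essentially the same approach as the paper: run Schweighofer's constructive proof of Putinar's theorem and check that every step preserves rationality, with P\'olya's theorem supplying the positivity certificate on a simplex and the explicit identity $(N+\tfrac14)\pm X_i = \sum_{j\neq i} X_j^2 + (X_i\pm\tfrac12)^2 + (N-\sum_j X_j^2)$ handling the substitution back. The one place where the paper is sharper than your sketch is the reduction step: rather than an $f-\epsilon$ perturbation or a partition-of-unity identity, Schweighofer's lemma gives $q := f - \lambda\sum_i (g_i-1)^{2k} g_i > 0$ on the ambient compact set $C\supseteq K_S$, and the only parameter to rationalize is the scalar $\lambda$, which can simply be decreased to a rational value.
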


\begin{proof}  The idea of Schweighofer's proof is to reduce to P\'olya's Theorem.
We follow the proof, making sure that each step preserves rationality.

Let $\Delta = \{ y \in [0,\infty)^{2n} \mid
y_1 + \dots + y_{2n} = 2n(N + \frac14) \} \sse \rr^{2n}$ and let $C$ be the compact
subset of $\rr^n$ defined by $C = l(\Delta)$, where $l : \rr^{2n} \rightarrow \rr^n$ is
defined by
$$y \mapsto \left(\frac{y_1 - y_{n+1}}{2}, \dots , \frac{y_n - y_{2n}}{2}\right)$$
Scaling the $g_i$'s by positive elements in $\qq$, we can assume that $g_i \leq 1$
on $C$ for all $i$.  The key to Schweighofer's proof is the
following observation \cite[Lemma 2.3]{schw2}:  There exists $\lambda \in \rr^+$
such that $q := f - \lambda \sum (g_i - 1)^{2k} g_i > 0$ on $C$.  Since we
can always replace $\lambda$ by a smaller value, we can assume $\lambda \in \qq$.

We have $q := f - \lambda \sum (g_i - 1)^{2k} g_i > 0$ on $C$, where $q \in \qq[X]$.
Write $q = \sum_{i=1}^d Q_i$, where $d = \deg q$ and $Q_i$ is the homogeneous part
of $q$ of degree $i$.  Let $Y = (Y_1, \dots , Y_{2n})$ and define in $\qq[Y]$
$$F(Y_1, \dots , Y_{2n})
:= \sum_{i=1}^d Q_i\left(\frac{Y_1 - Y_{n+1}}{2}, \dots , \frac{Y_n - Y_{2n}}{2}\right)
\left(\frac{Y_1 + \dots + Y_{2n}}{2n(N + 1/4)}\right)^{d-i}.$$  Then $F$ is homogenous
and $F > 0$ on $[0,\infty)^{2n} \setminus \{0\}$.  By P\'olya's Theorem, there is some
$k \in \nn$ so that $G := \left(\frac{Y_1 + \dots + Y_{2n}}{2n(N+1/4)}\right)^k F$ has
nonnegative coefficients as a polynomial in $\rr[Y]$.  Furthermore, since
$F \in \qq[Y_1, \dots , Y_{2n}]$, it is easy to see that $G \in \qq[Y]$.

Define $\phi : \qq[Y_1, \dots , Y_{2n}] \rightarrow \qq[X]$ by
$$\phi(Y_i) = N+\frac14 + X_i, \quad \phi(Y_{n+i}) = (N+\frac{1}{4}) - X_i, i=1, \dots , n$$
and note that $\phi(G) = q$ and $$\phi(Y_i) = (N+\frac{1}{4}) \pm X_i =
\sum_{j\neq i} X_j^2 + (X_i \pm \frac12)^2) + (N - \sum X_j^2) \in \sum \qq[X]^2 +
(N - \sum X_j^2).$$  Thus $\phi(G) = q$ implies there is a representation of
$q$ of the required type and then, since $f = q + \lambda  \sum (g_i - 1)^{2k} g_i$ with $\lambda \in
\qq$, we are done.
\end{proof}

\begin{remark}  In the preordering case (Schm\"udgen's Theorem), as noted above if
the semialgebraic set $K_S$ is compact, then it follows that the preordering $T_S$
in $\qq[X]$ is archimedean.   However it is more subtle in the quadratic module
case since it is not always clear how to decide if $M_S$ is archimedean for
a given set of generators $S$.  Thus an open question is the following:  Suppose $S \sse \qq[X]$
is a finite set of polynomials and $M_S$ is archimedean as a quadratic module
in $\rr[X]$.  Is it true that $M_S$ is archimedean as a quadratic module
in $\qq[X]$?  To put it more concretely, suppose $S = \{ g_1, \dots , g_s \} \sse \qq[X]$
and we know that there is some
$N \in \nn$ such that $$N - \sum X_i^2  = \sig_0 + \sig_1 g_1 + \dots + \sig_s g_s,$$
with $\sig_i \in \sum \rr[X]^2$.  Does there exist a representation with $\sig_i \in
\sum \qq[X]^2$?  Equivalently, does there exist $N \in \nn$ such that
for each $i = 1, \dots , n$ we can write $$N \pm X_i =
\sig_0 + \sig_1 g_1 + \dots + \sig_s g_s,$$ with $\sig_i \in \sum \qq[X]^2$?
\end{remark}

\bibliographystyle{amsplain}

\begin{thebibliography}{1}

\bibitem{bw}
R.~Berr and T.~W\"ormann, \emph{Positive polynomials on compact sets},
  Manuscripta Math. \textbf{104} (2001), 135--143.

\bibitem{hil}
C.~Hillar, \emph{Sums of polynomial squares over totally real fields are
  rational sums of squares}, Prod. Amer. Math. Soc. \textbf{137} (2009),
  921--930.

\bibitem{LAS}
J.-B. Lasserre, \emph{Global optimization with polynomials and the problem of
  moments}, SIAM J. Optimization \textbf{11} (2001), no.~3, 796--817.

\bibitem{PD}
A.~Prestel and C.N. Delzell, \emph{Positive {P}olynomials -- {F}rom {H}ilbert's
  17th {P}roblem to {R}eal {A}lgebra}, Springer Monographs Series, Berlin,
  2001.

\bibitem{put}
M.~Putinar, \emph{Positive polynomials on compact semi-algebraic sets}, Indiana
  Univ. Math. J. \textbf{42} (1993), no.~3, 969--984.

\bibitem{schm}
K.~Schm{\"u}dgen, \emph{The {K}-moment problem for compact semi-algebraic
  sets}, Math. Ann. \textbf{289} (1991), 203--206.

\bibitem{schw}
M.~Schweighofer, \emph{Iterated rings of bounded elements and generalizations
  of {S}chm\"udgen's theorem}, Ph.D. thesis, Universit\"at Konstanz, Konstanz,
  Germany, 2002.

\bibitem{schw2}
\bysame, \emph{Optimization of polynomials on noncompact semialgebraic sets},
  SIAM J. Optimization \textbf{15} (2005), no.~3, 805--825.

\bibitem{wor}
T.~W{\"o}rmann, \emph{Strikt positive polynome in der semialgebraischen
  geometrie}, Ph.D. thesis, Universit{\"a}t Dortmund, 1998.

\end{thebibliography}
\providecommand{\bysame}{\leavevmode\hbox to3em{\hrulefill}\thinspace}
\providecommand{\MR}{\relax\ifhmode\unskip\space\fi MR }
\providecommand{\MRhref}[2]{%
  \href{http://www.ams.org/mathscinet-getitem?mr=#1}{#2}
}
\providecommand{\href}[2]{#2}

\end{document}